\newtheorem{theorem}{Theorem}[section]
\newtheorem{lemma}[theorem]{Lemma}
\newtheorem{prop}[theorem]{Proposition}
\theoremstyle{definition}
\newtheorem{conj}[theorem]{Conjecture}
\newtheorem{defn}[theorem]{Definition}
\newtheorem{rem}[theorem]{Remark}
\newtheorem{remark}[theorem]{Remark}
\newtheorem{jacon}[theorem]{Jacobian Conjecture}
\numberwithin{equation}{section}
\def\mmm{\mathfrak{m}}
\def\fff{\mathfrak{f}}
\def\qqq{\mathfrak{q}}
\def\frakm{\mathfrak{M}}
\def\calm{\mathcal{M}}
\def\calv{\mathcal{V}}
\def\calj{\mathcal{J}}
\def\calr{\mathcal{R}}
\def\bfb{\mathbf{b}}
\def\bd{\mathbf{d}}
\def\ba{\mathbf{a}}
\def\bJ{\mathbf{J}}
\def\bF{\mathbf{F}}
\def\bt{{\mathbf{t}}}
\def\bbc{\mathbb{C}}
\def\bbf{\mathbb{F}}
\def\bbz{\mathbb{Z}}
\def\bba{{\mathbb{A}}}
\def\bbr{{\mathbb{R}}}
\def\bo{{\bar 1}}
\def\bz{{\bar 0}}
\def\sfd{\textsf{d}}
\def\sfU{\textsf{U}}
\def\spec{\mathsf{Spec}}
\def\Jac{\mathsf{Jac}}
\def\id{\mathsf{id}}
\def\ev{{\textsf{ev}}}
\newcommand{\co}{\mathcal{O}}
\newcommand{\cv}{\mathcal{V}}
\newcommand{\cw}{\mathcal{W}}
\newcommand{\cvv}{{|\mathcal{V}|}}
\newcommand{\cwv}{|\mathcal{W}|}
\newcommand{\phiv}{|\phi|}
\newcommand{\bbamn}{{\bba^{m|n}}}
\newcommand{\bbamnf}{{\bba^{m|n}_\bbf}}
\def\salg{\mathfrak{salg}_\bbf}
\def\sfx{\textsf{X}}
\def\sfp{\textsf{p}}
\def\scrm{\mathscr{M}}
\begin{document}

\title[On automorphisms of affine spaces and of affine spaces]
{On automorphisms  of affine superspaces}

\author{Bin Shu}

\address{School of Mathematical Sciences, Ministry of Education Key Laboratory of Mathematics and Engineering Applications \& Shanghai Key Laboratory of PMMP, East China Normal University, Shanghai 200241, China} \email{bshu@math.ecnu.edu.cn}

\subjclass[2010]{14A10, 14R15, 14A22, 16S38}

\keywords{Affine superspace, super version of Jacobian conjecture}

\thanks{This work is partially supported by the National Natural Science Foundation of China (Grant Nos. 12071136 and 12271345), supported in part by Science and Technology Commission of Shanghai Municipality (No. 22DZ2229014).}

\begin{abstract} In this note,  we propose a super version of Jacobian conjecture on the automorphisms of affine superspaces over an algebraically closed field $\bbf$ of characteristic $0$, which predicts that for  a homomorphism $\varphi$ of the polynomial superalgebra $\calr:=\bbf[x_1,\ldots,x_m; \xi_1,\ldots,\xi_m]$ over $\bbf$, if $\varphi$ satisfies the super version of Jacobian condition (SJ for short), then $\varphi$ gives rise to an automorphism of the affine superspace $\bba_\bbf^{m|n}$.  We verify  the conjecture if  additionally, the set $\scrm$ of maximal $\bbz_2$-homogeneous ideals of  $\calr$ is assumed to be preserved under $\varphi$. The statement is actually proved in any characteristic, i.e. a homomorphism $\varphi$ gives rise to an automorphism of $\bba_\bbf^{m|n}$ if SJ is satisfied with $\varphi$ and the set $\scrm$ is preserved under $\varphi$ for an algebraically closed field $\bbf$  of any characteristic.
\end{abstract}

\maketitle
\setcounter{tocdepth}{1}\tableofcontents

\section{Preliminaries}
\subsection{Classical Jacobian conjecture}
In the classical Jacobian conjecture, the ground field $\bbf$ is $\bbc$ or $\bbr$. Let $\theta: \bba_\bbf^m\rightarrow \bba_\bbf^m$ be a morphism of affine spaces in the sense of Zariski topology, which sends $(t_1,\ldots,t_m)\in \bba_\bbf^m$ to $(\theta_1(t_1,\ldots,t_m),\ldots, \theta_m(t_1,\ldots,t_m))\in \bba_{\bbf}^m$, and $J(\theta)$ denotes the Jacobian matrix which is $({\partial\theta_i\over \partial t_j})_{i,j=1,\ldots,m}$. Set $\vartheta:=\theta^*$, which is the comorphism of $\theta$.

\begin{jacon}
{
If $\operatorname{det}(J(\theta)) \in \mathbb{F}^{\times}:=\bbf\backslash \{0\}$, then $\theta$ is an automorphism of the affine space $\mathbb A^m_\bbf$, that is to say,   $\vartheta$ is an automorphism of the polynomial ring $\mathbb{F}\left[x_1, \ldots, x_m\right]$ where $x_i$ means the $i$th coordinate function for $i=1, \ldots, m$.
}

\end{jacon}
     So we have the Jacobian matrix $J(\vartheta)=({{\partial\vartheta(x_i)}\over {\partial x_j}})_{m\times m}$ associated with $\vartheta$. Obviously, both $J(\vartheta)$ and $J(\theta)$ have the same meaning. We will not distinguish them.

The reader may refer to \cite{KM} {\sl{etc.}} for the recent progress of Jacobian conjecture.

\subsection{A strong version of Jacobian statement}
Recall that the endomorphism space of the polynomial algebra $\bbf[x_1,\ldots,x_m]$  is  in a one-to-one correspondence with the set of morphisms of the affine space $\bba^m_\bbf$ to itself. The automorphisms of $\bba_m^\bbf$ are completely determined by the corresponding automorphisms of the polynomial algebra $\bbf[x_1,\ldots,x_m]$.
In the following, simply denote by $P_m$ the polynomial algebra $\bbf[x_1,\ldots,x_m]$ over $\bbf$.

\begin{theorem}\label{thm: strong ver Jacobi}  (\cite[Theorem 4.1]{CSY} and  \cite[Corollary 5.2]{CSY})
 Let  $\bbf$ be an algebraically closed field of any characteristic. Suppose  $\vartheta$ is a ring endomorphism of $P_m$. If $\textsf{det}(J(\vartheta))\in \bbf^\times$, and $\vartheta$ preserves the set  $\scrm$ of maximal ideals of $P_m$,  then $\vartheta$ is an automorphism of $P_m$.
\end{theorem}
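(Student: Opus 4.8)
The plan is to pass to geometry and then invoke standard facts about unramified morphisms of affine varieties together with the normality of $P_m$. Write $B:=P_m=\bbf[x_1,\dots,x_m]$ and let $\theta:\bba^m_\bbf\to\bba^m_\bbf$ be the morphism with comorphism $\theta^*=\vartheta$; on $\bbf$-points it sends $a$ to $\bigl(\vartheta(x_1)(a),\dots,\vartheta(x_m)(a)\bigr)$, and on maximal ideals it is $\mathfrak m_a\mapsto\vartheta^{-1}(\mathfrak m_a)=\mathfrak m_{\theta(a)}$. Thus the hypothesis that $\vartheta$ preserves $\scrm$ says that $\theta$ is a bijection of the set of closed points of $\bba^m_\bbf$ (identified with $\scrm$ via the Nullstellensatz); in particular $\theta$ is both injective and surjective on closed points. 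As a first consequence $\theta$ is dominant, so $\vartheta$ is injective, and I set $A:=\vartheta(B)\cong P_m$, so that $A\subseteq B$ are polynomial rings in $m$ variables, $B$ is a finitely generated $A$-algebra, and $\operatorname{Frac}A\subseteq\operatorname{Frac}B$ is a finite field extension (equal transcendence degree over $\bbf$).

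Next I would use the Jacobian hypothesis to show that $A\hookrightarrow B$ is unramified. As $A$ is generated over $\bbf$ by the $\vartheta(x_i)$, the relative differentials are
\[
\Omega_{B/A}\;=\;\Omega_{B/\bbf}\big/\langle\, d(\vartheta(x_i)):1\le i\le m\,\rangle\;=\;\operatorname{coker}\bigl(B^{m}\xrightarrow{\;J(\vartheta)\;}B^{m}\bigr),
\]
and $\det J(\vartheta)\in\bbf^\times$ makes this matrix invertible over $B$, so $\Omega_{B/A}=0$; being of finite type, $A\hookrightarrow B$ is unramified. In particular $\operatorname{Frac}B/\operatorname{Frac}A$ is finite \emph{separable}, and every fibre of $\theta$ over a closed point is a reduced, finite $\bbf$-algebra.

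Combining the two inputs: by generic freeness (together with finiteness of $\operatorname{Frac}B/\operatorname{Frac}A$) there is $0\ne a\in A$ with $B_a$ free over $A_a$ of finite rank $d:=[\operatorname{Frac}B:\operatorname{Frac}A]$, so the fibre $\theta^{-1}(v)$ over any closed point $v\in D(a)$ is a reduced $d$-dimensional $\bbf$-algebra, i.e.\ consists of exactly $d$ points; since $\theta$ is injective on closed points this forces $d=1$, i.e.\ $\operatorname{Frac}A=\operatorname{Frac}B$, so $\theta$ is birational. On the other hand $\theta$ is surjective on closed points, so its image — constructible by Chevalley and containing every closed point of the Jacobson scheme $\operatorname{Spec}A$ — is all of $\operatorname{Spec}A$; thus every height-one prime $\mathfrak p\subset A$ has a preimage $\mathfrak q\subset B$. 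Now $A\cong P_m$ is regular, so $A_\mathfrak p$ is a DVR, and the only rings $R$ with $A_\mathfrak p\subseteq R\subseteq\operatorname{Frac}A=\operatorname{Frac}B$ are $A_\mathfrak p$ and $\operatorname{Frac}A$. Since $B_\mathfrak q$ is such a ring and its maximal ideal is non-zero (it lies over $\mathfrak p\ne 0$), we get $B_\mathfrak q=A_\mathfrak p$, whence $B\subseteq A_\mathfrak p$. As $\mathfrak p$ was arbitrary and $A$, being regular, is a normal Krull domain, this yields $B\subseteq\bigcap_{\operatorname{ht}\mathfrak p=1}A_\mathfrak p=A$, i.e.\ $A=B$. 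Therefore $\vartheta$ is surjective, and being injective it is an automorphism of $P_m$.

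The step I expect to be the crux — and the reason both hypotheses are genuinely needed when $\operatorname{char}\bbf=p>0$ — is the chain ``$\theta$ bijective on $\scrm$ and $\det J(\vartheta)$ a unit $\Rightarrow$ $\theta$ birational $\Rightarrow$ $A=B$''. The Jacobian condition is exactly what excludes inseparability in $\operatorname{Frac}B/\operatorname{Frac}A$ — a Frobenius-type endomorphism has $\det J=0$ — so that counting $d$ points in a general fibre is legitimate; and it is the \emph{bijectivity} of $\theta$ on $\scrm$, not mere surjectivity, that forces $d=1$: in characteristic $p$ the Artin--Schreier endomorphism $x_1\mapsto x_1^{\,p}-x_1$, $x_i\mapsto x_i$ for $i>1$, has $\det J=-1\in\bbf^\times$ and is surjective on closed points, yet is $p$-to-one and hence not an automorphism. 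The remaining ingredients — generic freeness of $B$ over $A$, reducedness of the general fibre from unramifiedness, and the bookkeeping with Krull's intersection formula — are routine. (For $\operatorname{char}\bbf=0$ one could instead finish via simple connectedness of $\bba^m_\bbf$, or via the theorem that an injective polynomial self-map of $\bba^m_\bbf$ is an automorphism, but the argument above has the merit of being uniform in the characteristic, as the theorem requires.)
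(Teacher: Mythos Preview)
Your argument is sound and gives a self-contained proof valid in every characteristic. Note, however, that the paper does not actually prove Theorem~\ref{thm: strong ver Jacobi}: it is quoted from \cite{CSY}, and the only trace of an argument in the \LaTeX\ source is a commented-out sketch (inside an \texttt{\textbackslash iffalse\dots\textbackslash fi} block) that treats characteristic~$0$ alone, using generic smoothness for dominance, reading injectivity of $\theta$ off the maximal-ideal hypothesis, and then asserting without further justification that this forces $\vartheta$ to be surjective.

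Your route is genuinely different and more complete. You replace the characteristic-$0$ smoothness input by the unramifiedness coming from $\det J(\vartheta)\in\bbf^{\times}$ (so $\Omega_{B/A}=0$), which is characteristic-free; the chain ``bijective on closed points $+$ generic freeness $+$ reduced fibres $\Rightarrow[\operatorname{Frac}B:\operatorname{Frac}A]=1$'' followed by the Krull--normality step $B\subseteq\bigcap_{\operatorname{ht}\mathfrak p=1}A_{\mathfrak p}=A$ then supplies exactly the surjectivity of $\vartheta$ that the commented sketch leaves open. So your proof fills the gap rather than merely reproducing an existing argument, and your closing remark about the Artin--Schreier map nicely isolates why both hypotheses are needed in positive characteristic.

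One wording issue in your first paragraph: in the paper (and in \cite{CSY}) ``$\vartheta$ preserves $\scrm$'' means that the \emph{forward} ideal $\vartheta(\mathfrak m_a)\cdot P_m$ is again maximal --- equivalently, each scheme-theoretic fibre $\theta^{-1}(a)=\spec\bigl(P_m/\vartheta(\mathfrak m_a)P_m\bigr)$ is a single reduced point --- whereas the map $\mathfrak m_a\mapsto\vartheta^{-1}(\mathfrak m_a)$ you display is just $\theta$ itself and always lands in $\scrm$, so cannot be the content of the hypothesis. The forward-image reading still yields bijectivity of $\theta$ on closed points (every fibre is a singleton), which is all you use thereafter, so the proof stands; only the opening sentence needs rephrasing.
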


\section{Supercommutative superalgebras}
We always suppose that $\bbf$ is  an algebraically closed field of characteristic not equal to $2$. Recall that a supercommutative superalgebra $A=A_\bz\oplus A_\bo$ over $\bbf$ satisfies $a b= (-1)^{\sfp(a)\sfp(b)}ba$ and  for $\bbz_2$-homogeneous elements $a,b\in A$. Here and further $\sfp$ always denotes   the parity.
Set $\salg$ to be the category of supercommutative $\bbf$-superalgeras.  Throughout the paper,  by superalgebras we mean the objects in $\salg$.

\subsection{} For a superalgebra $A=A_\bz\oplus A_\bo\in\salg$, let $J_A$ denote the ideal generated by $A_\bo$. Set $A^\ev=A\slash J_A$. Then $A^\ev$ becomes an ordinary commutative algebra. Note that $A_\bz$ and $A^\ev$ differ only by nilpotent elements. Hence the prime spectrums $\spec(A_\bz)$ and $\spec(A^\ev)$ coincide.

Note that for $A=A_\bz\oplus A_\bo\in \salg$, the odd part $A_\bo$ consists of nilpotent elements. So any maximal $\bbz_2$-homogeneous ideal $I$ of $A$ must contain $A_\bo$, and $I_\bz$ must be a maximal ideal of $A_\bz$.  A supercommutative $\bbf$-superalgebra $A$ is called local if it has a unique maximal $\bbz_2$-homogeneous ideal. So for a local superalgebra $A\in\salg$ with the unique maximal homogeneous ideal $\frakm$, $A_\bz$ must be a local ring with the vector-space decomposition $A_\bz=\bbf\oplus \mmm$ where $\mmm$ is the unique maximal ideal of $A_\bz$, and $A=\bbf\oplus \frakm$ with $\mmm=\frakm\cap A_\bz$.

\subsection{} Now we recall the notion of super derivations which will be used later. For $A\in \salg$ and an $A$-supermodule $M$, we can regard $M$ as an $A$-bimodule so that $mx=(-1)^{\sfp(m)\sfp(x)}xm$ for $x\in A_{\sfp(x)}$ and $m\in M_{\sfp(m)}$. With this convention, an $\bbf$-linear map  $D:A\rightarrow M$ satisfying the property $D(a)=0$ for all $a\in\bbf$ is said to be  a $\bbz_2$-homogeneous super derivation from $A$ to $M$ if $D(fg)=D(f)g+(-1)^{\sfp(D)\sfp(f)}fD(g)$ for $\bbz_2$-homogeneous elements $f,g\in A$.
 Denote by $\text{Der}(A,M)$ the vector superspace spanned by $\bbz_2$-homogeneous super derivations from $A$ to $M$.

\subsection{} For $A=A_\bz+A_\bo$, $B=B_\bz+B_\bo\in \salg$,  a homomorphism $\varphi$ from $A$ to $B$ is called $\bbz_2$-homogeneous  if the parities are preserved, i.e.  $\varphi(A_\bz)\subset B_\bz$ and $\varphi(A_\bo)\subset B_\bz$.
\vskip5pt

For simplicity, {\sl{we will simply  write ``homogenous" for ``$\bbz_2$-homogenous"  if the context is clear.}}

\section{Superspaces}
Let us  briefly recall some material on superspaces.  The reader may refer to \cite{CCF} or \cite{Var}, in particular to \cite[Chapter 4]{Man} for further information.
\subsection{Ringed superspaces}Recall that a ringed superspace $\cv$ is a topological space $|\cv|$ endowed with a sheaf of supercommutative  $\bbf$-superalgebras. The sheaf is denoted by $\co_\cv$, called the structure sheaf of $\cv$. This ringed superspace $\cv$ is also denoted by $(\cvv, \co_\cv)$ as usually.
 Set $\cv_{\ev}:=(|\cv|, \co_{\cv,\bz})$ which  is an ordinary ringed space on the topology space $|\cv|$ where $\co_{\cv,\bz}$ stands for the sheaf of supercommutative $\bbf$-algebras on $|\cv|$, i.e. for any open subset $U$ of $\cvv$, $\co_{\cv,\bz}(U):=\co_\cv(U)_\bz$ the even part of the superalgebra $\co_\cv(U)$. Similarly, one has the sheaf $\co_{\cv,\bo}$, defined as $\co_{\cv,\bo}(U)=\co_\cv(U)_\bo$ the odd part of the superalgebra $\co_\cv(U)$, which is a sheaf of $\co_{\cv,\bz}$-modules. 

\subsection{Superspaces}

 \begin{defn} A superspace over $\bbf$ is a ringed superspace $\cv$ satisfying that the stalk $\co_{\calv,x}$ is a local superalgebra for all $x\in\cvv$, and that $\co_{\cv,\bo}$ is a quasi-coherent sheaf of $\co_{\cv,\bz}$-modules.
 \end{defn}

 \subsubsection{Morphisms} Let $\cv=(\cvv,\co_\cv)$ and $\cw=(\cwv,\co_\cw)$ be two superspaces. A morphism $\phi: \cv\rightarrow \cw$ of superspaces  is by definition a pair $\phi=(\phiv, \phi^*)$ where $\phiv$ and $\phi^*$ are as follows
 \begin{itemize}
 \item[(1)] $\phiv:\cvv\rightarrow \cwv$ is a continuous map;

 \item[(2)] $\phi^*:\co_\cw\rightarrow \phiv_*\co_\cv$ is a homogeneous map of sheaves of superalgebras on $\cwv$ where $|\phi|_*$ is the direct image sheaf of $\phiv$ on $\cw$. This means, for $U$ open in $\cwv$ there exists a homogeneous homomorphism $\phi^*_U: \co_\cw(U)\rightarrow \co_\cv(|\phi|^{-1}(U))$ compatible with restrictions, i.e. for any open subset $V$ in $\cwv$ with $U\subset V$, the following diagram is commutative
     \[
\begin{CD}
\co_\cw(V) @>{\phi^*_V}>> \co_\cv(\phiv^{-1}(V))\\
@V{\rho_{UV}}VV @VV{\rho_{\phiv^{-1}(U)\phiv^{-1}(V)}}V  \\
\co_\cw(U) @>>{\phi^*_U}>\co_\cv(\phiv^{-1}(U))
\end{CD}
\]
where the vertical maps are restrictions;

 \item[(3)] the map of local superalgebras $\phi_v^*:\co_{\cw,\phiv(v)}\rightarrow \co_{\cv,v}$ for any $v\in\cvv$ is a local morphism, i.e., sends the $\bbz_2$-homogeneous maximal ideal of $\co_{\cw,\phiv(v)}$ to the $\bbz_2$-homogeneous maximal ideal of $\co_{\cv,v}$.
 \end{itemize}

 \section{Affine superspaces}

  In the following, we will first recall the affine superspace $\bbamnf$ of super dimension $m|n$ over $\bbf$, which is a ringed superspace with $\bbamnf=(|\bbamnf|=\bbf^m, \co_{\bbamnf})$ (see \cite[Chapter 4]{Man} for the further details).

 \subsection{}\label{sec: aff super} Let  $\calr=\bbf[x_1,\ldots,x_m; \xi_1,\ldots,\xi_n]$ be the polynomial superalgebra with even indeterminates $x_1,\ldots,x_m$, and odd indeterminates $\xi_1,\ldots,\xi_n$. This means, $\bbf[x_1,\ldots,x_m]$ is an ordinary polynomial algebra, but the odd indeterminates $\{\xi_k\}_{k=1,\ldots,n}$ generate the exterior algebra $\bigwedge(\xi_1,\ldots,\xi_n)$ with $\xi_k\xi_l=-\xi_l\xi_k$  and $x_i\xi_k=\xi_k x_i$ for $i=1,\ldots,m$ and $k,l=1,\ldots,n$. As a vector space, $\calr=\bbf[x_1,\ldots,x_m]\otimes \bigwedge(\xi_1,\ldots,\xi_n)$. The polynomial superalgebra $\calr$ can be regarded as an exterior algebra generated by $\{\xi_k\mid k=1,\ldots,n\}$ over $\bbf[x_1,\ldots,x_m]$, which can be denoted by $\bigwedge_{\bbf[x_1,\ldots,x_m]}(\xi_1,\ldots,\xi_n)$.
  The $(x_i, \xi_k)_{i=1,\ldots,m; k=1,\ldots,n}$ is called a coordinate system of the affine superspace $\bbamn$. We will denote by $\bJ$  the ideal of $\calr$ generated by $\xi_1,\ldots,\xi_n$.

Then $\calr=\calr_\bz\oplus\calr_\bo$ is a supercommutative superalgebra with
$$\calr_\bz=\{f_0+\sum_{\sharp K \text{ even}}f_K\xi_K\mid K=\{1\leq k_1<\cdots< k_r\leq n\}\}$$
where $\xi_K=\xi_{k_1}\cdots\xi_{k_r}$ with $\sharp K=r$ ($\sharp$ denotes the number of elements of a set), and $f_0,f_K \in\bbf[x_1,\ldots,x_m]$, and
$$\calr_\bo=\{\sum_{\sharp L  \text{ odd}}f_L\xi_L\mid L=\{1\leq l_1<\cdots< l_s\leq n\}\}.$$
Clearly, we can identify $\calr^\ev$ with $\bbf[x_1,\ldots,x_m]$, correspondingly $\calr=\calr^\ev[\xi_1,\ldots,\xi_n]$ which means that $\calr$ is the exterior algebra over $\calr^\ev$ generated by $\xi_1,\ldots,\xi_n$.

Recall that the ordinary affine space $\bba^m_\bbf$ consists of the topology space $\bbf^m$ with the Zariski topology, and the sheaf $\co_{\bba^m}$ of regular functions on $\bbf^m$. We define the sheaf $\co_{\bbamnf}$ of superalgebras on $\bbf^m$ in the following way.
Given an open subset $U$ in $\bbf^m$, $$\co_{\bbamnf}(U)=\co_{\bbf^m}(U)\otimes \bigwedge(\xi_1,\dots,\xi_n)=\bigwedge_{\co_{\bbf^m}(U)} (\xi_1,\dots,\xi_n).$$
Clearly, the stalk $\co_{\bbamnf,\ba}$ at $\ba\in \bbf^m$ is equal to $\co_{\bbf^m,\ba}\otimes \bigwedge(\xi_1,\dots,\xi_n)$; and $\co_{\bbamnf}(U)=\bigcap_{\ba\in U}\co_{\bbf^m,\ba}\otimes \bigwedge(\xi_1,\dots,\xi_n)$ for any open subset $U$ in $\bbf^m$. Consequently, we have

\begin{lemma}
$\calr=\bigcap_{\ba \in\bbf^m}\co_{\bbamnf,\ba}$.
\end{lemma}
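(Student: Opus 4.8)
The plan is to prove the equality $\calr = \bigcap_{\ba\in\bbf^m}\co_{\bbamnf,\ba}$ by reducing it to the well-known classical fact for the ordinary polynomial ring, namely $\bbf[x_1,\ldots,x_m] = \bigcap_{\ba\in\bbf^m}\co_{\bbf^m,\ba}$, where $\co_{\bbf^m,\ba}$ is the localization of $\bbf[x_1,\ldots,x_m]$ at the maximal ideal $\calm_\ba$, viewed inside the function field $\bbf(x_1,\ldots,x_m)$. Since $\co_{\bbamnf,\ba} = \co_{\bbf^m,\ba}\otimes_\bbf \bigwedge(\xi_1,\ldots,\xi_n)$ as computed just above, each stalk is a free $\co_{\bbf^m,\ba}$-module with the fixed basis $\{\xi_K\}_{K\subseteq\{1,\ldots,n\}}$, and likewise $\calr = \bbf[x_1,\ldots,x_m]\otimes_\bbf \bigwedge(\xi_1,\ldots,\xi_n)$ is free over $\bbf[x_1,\ldots,x_m]$ with the same basis.

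First I would fix the ambient superalgebra in which all the intersections take place: let $\calr_{\mathrm{rat}} := \bbf(x_1,\ldots,x_m)\otimes_\bbf\bigwedge(\xi_1,\ldots,\xi_n)$, the ``superalgebra of rational superfunctions''. Every stalk $\co_{\bbamnf,\ba}$ embeds into $\calr_{\mathrm{rat}}$ compatibly with the embedding $\calr\hookrightarrow\calr_{\mathrm{rat}}$, and with respect to all these embeddings the basis $\{\xi_K\}$ is the common one. Then I would take an element $g\in\bigcap_{\ba\in\bbf^m}\co_{\bbamnf,\ba}\subseteq\calr_{\mathrm{rat}}$ and expand it uniquely as $g = \sum_{K} g_K\,\xi_K$ with $g_K\in\bbf(x_1,\ldots,x_m)$. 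For each fixed $\ba$, membership $g\in\co_{\bbamnf,\ba} = \co_{\bbf^m,\ba}\otimes\bigwedge(\xi_1,\ldots,\xi_n)$ forces each coefficient $g_K\in\co_{\bbf^m,\ba}$, precisely because the $\xi_K$ form a free basis over the (commutative) ring $\co_{\bbf^m,\ba}$ and the decomposition of $g$ is unique. Since this holds for every $\ba\in\bbf^m$, we get $g_K\in\bigcap_{\ba\in\bbf^m}\co_{\bbf^m,\ba} = \bbf[x_1,\ldots,x_m]$ for each $K$, whence $g = \sum_K g_K\xi_K\in\bbf[x_1,\ldots,x_m]\otimes\bigwedge(\xi_1,\ldots,\xi_n) = \calr$. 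The reverse inclusion $\calr\subseteq\bigcap_\ba\co_{\bbamnf,\ba}$ is immediate since $\bbf[x_1,\ldots,x_m]\subseteq\co_{\bbf^m,\ba}$ for every $\ba$, so $\calr = \bbf[x_1,\ldots,x_m]\otimes\bigwedge(\xi_1,\ldots,\xi_n)\subseteq\co_{\bbf^m,\ba}\otimes\bigwedge(\xi_1,\ldots,\xi_n) = \co_{\bbamnf,\ba}$.

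The only genuinely non-formal input is the classical identity $\bbf[x_1,\ldots,x_m] = \bigcap_{\ba}\co_{\bbf^m,\ba}$; I would either cite it or recall its one-line proof: a rational function $f = p/q$ in lowest terms that lies in every localization $\co_{\bbf^m,\ba}$ has $q$ a unit in $\co_{\bbf^m,\ba}$ for all $\ba$, i.e. $q(\ba)\neq 0$ for all $\ba\in\bbf^m$, and since $\bbf$ is algebraically closed the Nullstellensatz forces $q\in\bbf^\times$, so $f\in\bbf[x_1,\ldots,x_m]$. Everything else is bookkeeping about the exterior-algebra grading commuting with localization, which is routine because localization is exact and the transition is at the level of coefficients. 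I do not expect any real obstacle here; the subtlety to be careful about is simply to phrase the uniqueness-of-coefficients argument correctly with respect to the free-module structure over each (non-field) local ring $\co_{\bbf^m,\ba}$, rather than over $\bbf$, and to make sure all intersections are taken inside the single ambient ring $\calr_{\mathrm{rat}}$.
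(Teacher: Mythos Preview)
Your proposal is correct and follows essentially the same approach as the paper. The paper does not give a separate proof for this lemma at all: it simply records, just before the lemma, that $\co_{\bbamnf,\ba}=\co_{\bbf^m,\ba}\otimes\bigwedge(\xi_1,\ldots,\xi_n)$ and that $\co_{\bbamnf}(U)=\bigcap_{\ba\in U}\co_{\bbf^m,\ba}\otimes\bigwedge(\xi_1,\ldots,\xi_n)$, and then states the lemma as an immediate consequence (taking $U=\bbf^m$). Your argument is precisely the careful unpacking of that one line---reducing to the classical identity $\bbf[x_1,\ldots,x_m]=\bigcap_{\ba}\co_{\bbf^m,\ba}$ via the free $\{\xi_K\}$-basis---so there is no genuine difference in strategy, only in the level of detail.
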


In particular, $\co_{\bbamnf,\ba}$ is a local ring with the unique maximal $\bbz_2$-homogeneous ideal $\mmm_{\bbamn,\ba}$ which is equal to $\mmm_\ba+\bJ$ where $\mmm_\ba$ means the maximal ideal of the stalk $\co_{\bbf^m,\ba}$  at $\ba$ of the structure sheaf  for the affine space $\bba_\bbf^m$.

\subsection{Morphisms and automorphisms}
In the subsequent arguments, {\sl {we will simply write $\bbamnf$ as $\bbamn$ if the context is clear.}}

\begin{lemma}\label{lem: 4.2} Suppose $\phi=(\phiv, {\phi}^*)$ is a morphism from $\bbamn$ to itself, then there are $f_1,\ldots, f_m\in\calr_\bz$ and $q_1,\ldots,q_n\in \calr_\bo$ such that
 \begin{align}\label{eq: morTOpoly}
 \phi^*(x_i)=f_i, i=1,\ldots,m; \;\; {\phi}^*(\xi_k)=q_k, k=1,\ldots,n.
 \end{align}
  Such a sequence $(f_i,q_k)_{i=1,\ldots,m;k=1,\ldots,n}$ is unique.
\end{lemma}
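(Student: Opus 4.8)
The plan is to extract the tuple $(f_i,q_k)$ from the comorphism $\phi^*$ evaluated on the whole space, once the corresponding ring of sections has been identified with $\calr$ itself.

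First I would record that $\calr$ is the ring of global sections of the structure sheaf. From the formula $\co_{\bbamnf}(U)=\bigcap_{\ba\in U}\co_{\bbf^m,\ba}\otimes\bigwedge(\xi_1,\dots,\xi_n)$ together with the preceding Lemma, one has
\[
\co_{\bbamnf}(\bbf^m)=\bigcap_{\ba\in\bbf^m}\co_{\bbamnf,\ba}=\calr .
\]
Since $|\phi|\colon\bbf^m\to\bbf^m$ is defined on the whole topological space, $|\phi|^{-1}(\bbf^m)=\bbf^m$, so evaluating the sheaf morphism $\phi^*$ on $U=\bbf^m$ gives a $\bbz_2$-homogeneous $\bbf$-algebra homomorphism
\[
\vartheta:=\phi^*_{\bbf^m}\colon\ \calr=\co_{\bbamnf}(\bbf^m)\ \longrightarrow\ \co_{\bbamnf}\bigl(|\phi|^{-1}(\bbf^m)\bigr)=\calr .
\]

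Next, for existence I would simply set $f_i:=\vartheta(x_i)$ for $i=1,\dots,m$ and $q_k:=\vartheta(\xi_k)$ for $k=1,\dots,n$, where $x_i$ and $\xi_k$ are regarded as elements of $\calr=\co_{\bbamnf}(\bbf^m)$ via the identification above. Because $\vartheta$ preserves parity, $x_i\in\calr_\bz$ gives $f_i\in\calr_\bz$ and $\xi_k\in\calr_\bo$ gives $q_k\in\calr_\bo$, and \eqref{eq: morTOpoly} holds by construction.

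Uniqueness is then immediate: any sequence fulfilling \eqref{eq: morTOpoly} must coincide with $\bigl(\vartheta(x_i),\vartheta(\xi_k)\bigr)$, which is determined by $\phi$. (It is worth remarking, although the statement does not require it, that since $x_1,\dots,x_m,\xi_1,\dots,\xi_n$ generate $\calr$ as an $\bbf$-algebra, $\vartheta$ is already pinned down by the tuple $(f_i,q_k)$; a routine restriction/localization argument then recovers $\phi^*_U$ on every open $U$, and the underlying map $|\phi|$, from $\vartheta$.) The only step carrying real content is the identification $\co_{\bbamnf}(\bbf^m)=\calr$ — i.e. that the comorphism evaluated on the full affine superspace stays inside the polynomial superalgebra — and that is exactly what the Lemma already established provides; everything else is bookkeeping with parities.
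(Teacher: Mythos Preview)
Your argument is correct and is precisely an unpacking of what the paper means by ``It follows from the definition'': you identify $\co_{\bbamnf}(\bbf^m)$ with $\calr$, use that $\phi^*_{\bbf^m}$ is a $\bbz_2$-homogeneous $\bbf$-algebra map, and read off $f_i=\phi^*_{\bbf^m}(x_i)\in\calr_\bz$, $q_k=\phi^*_{\bbf^m}(\xi_k)\in\calr_\bo$. The paper's proof consists of that one sentence, so your proposal is the same approach, just spelled out in more detail.
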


\begin{proof} It follows from the definition.
\end{proof}

By the above lemma, we give the following definition.
\begin{defn}\label{def: auto} Let $\phi=(\phiv,\phi^*)$ be a morphism corresponding to the sequence $(f_i,q_k)_{i=1,\ldots,m;k=1,\ldots,n}$ as the above lemma. Call $\phi$  an automorphism of the affine superspace $\bbamn$ if $(f_i,q_k)_{i=1,\ldots,m;k=1,\ldots,n}$ forms a coordinate system of $\bbamn$.
\end{defn}

\begin{prop}\label{prop: 4.4} Suppose  $\phi=(\phiv,\phi^*)$ is a morphism from $\bbamn$ to itself.
\begin{itemize}
\item[(1)]
If  $\phi$ is  an automorphism then the following statements hold.
\begin{itemize}
\item[(1.1)] $\phiv$ is an automorphism of the affine space $\bbf^m$.

\item[(1.2)] $\phi^*_{\bbf^m}$ is a homogeneous automorphism of the superalgebra $\calr$. 
\end{itemize}
\item[(2)] Conversely, if Statement (1.2) holds, then $\phi$ is an automorphism of $\bbamn$.
    \end{itemize}
\end{prop}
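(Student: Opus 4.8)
The plan is to deduce everything from Lemma~\ref{lem: 4.2}, the description of the stalks of $\co_{\bbamn}$ recalled in Section~\ref{sec: aff super}, and the classical dictionary between ring endomorphisms of $P_m$ and morphisms of $\bba^m_\bbf$. By Lemma~\ref{lem: 4.2}, $\phi$ is encoded by the tuple $(f_i,q_k)$ through $\phi^*_{\bbf^m}(x_i)=f_i$ and $\phi^*_{\bbf^m}(\xi_k)=q_k$. Unwinding the meaning of a coordinate system of $\bbamn$ --- a tuple $(f_i,q_k)$ with $f_i\in\calr_\bz$, $q_k\in\calr_\bo$ forming a system of free homogeneous generators that identifies $\calr$ with $\bigwedge_{\bbf[f_1,\ldots,f_m]}(q_1,\ldots,q_n)$ --- the assertion ``$(f_i,q_k)$ is a coordinate system'' is equivalent to ``the homogeneous endomorphism $\psi:=\phi^*_{\bbf^m}$ of $\calr$, which carries $(x_i,\xi_k)$ to $(f_i,q_k)$, is an automorphism of $\calr$''. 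Combined with Definition~\ref{def: auto}, this already gives the equivalence of ``$\phi$ is an automorphism'' with statement (1.2), hence both the implication ``(1)$\Rightarrow$(1.2)'' and the whole of Part (2).

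It remains to prove ``(1)$\Rightarrow$(1.1)'', so assume $\psi=\phi^*_{\bbf^m}$ is a homogeneous automorphism of $\calr$. Being parity-preserving, $\psi$ maps $\calr_\bo$ into $\calr_\bo$, so $\psi(\bJ)\subseteq\bJ$; since a bijective parity-preserving homomorphism has parity-preserving inverse, the same holds for $\psi^{-1}$, whence $\psi(\bJ)=\bJ$. Thus $\psi$ descends to an automorphism $\bar\psi$ of $\calr\slash\bJ=\calr^\ev=P_m$, and by the classical correspondence $\bar\psi$ is the comorphism $\theta^*$ of an automorphism $\theta$ of the affine space $\bba^m_\bbf$. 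It then suffices to identify $\phiv$ with $\theta$ on the point set $\bbf^m$. For $v\in\bbf^m$ the maximal $\bbz_2$-homogeneous ideal of the local superalgebra $\co_{\bbamn,v}$ is $\mmm_v+\bJ$, and $\phi_v^*$ is the localization of $\psi$ at $v$; the locality axiom~(3) in the definition of a morphism forces $\psi$ (localized) to carry $\mmm_{\phiv(v)}+\bJ$ into $\mmm_v+\bJ$, and reducing modulo $\bJ$ this says $\bar\psi$ carries $\mmm_{\phiv(v)}$ into $\mmm_v$ --- precisely the condition $\theta(v)=\phiv(v)$. Hence $\phiv=\theta$ is an automorphism of $\bba^m_\bbf$, which is (1.1).

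The only delicate point is ``(1)$\Rightarrow$(1.1)'': one must pin down the precise content of ``coordinate system'' (freeness, not mere generation), check that an automorphism of $\calr$ necessarily stabilizes $\bJ$ and so induces an automorphism of $P_m$, and then match the a priori independently prescribed continuous map $\phiv$ with the reduced morphism $\theta$ using only axiom~(3) together with the explicit stalks $\co_{\bbamn,v}$ and their maximal homogeneous ideals; everything else is formal bookkeeping with Lemma~\ref{lem: 4.2} and Definition~\ref{def: auto}. One may additionally record, though it is not needed for the statement, that these equivalences make $\phi$ invertible as a morphism of superspaces, the inverse being the morphism attached to $\psi^{-1}$.
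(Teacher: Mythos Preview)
Your proof is correct and follows essentially the same route as the paper: unpack the meaning of ``coordinate system'' to obtain the equivalence between $\phi$ being an automorphism and $\phi^*_{\bbf^m}$ being a homogeneous automorphism of $\calr$ (handling (1.2) and (2) at once), and then reduce modulo $\bJ$ to recover the underlying morphism on $\bba^m_\bbf$. The only difference is that the paper simply asserts that the resulting automorphism of $P_m=\calr^\ev$ ``is just $|\phi|$'', whereas you supply the missing verification by invoking the locality axiom (3) on stalks to match $|\phi|$ with the reduced morphism $\theta$; this is a welcome bit of extra care, not a different strategy.
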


\begin{proof} For the part (1),  we suppose $\phi=(\phiv,\phi^*)$ is an automorphism. As to Statement (1.1), from the definition of automorphisms it follows that $f_i^\sharp$, $i=1,\ldots,m$, are algebraically-independent  generators of $\bbf[x_1,\ldots,x_m]$ where $f_i^\sharp$ is the image of $f_i$ in $\calr^\ev$ for $i=1,\ldots,m$, and we identify $\calr^\ev$ with $\bbf[f_1^\sharp,\ldots,f_m^\sharp]=\bbf[x_1,\ldots,x_m]$. Hence $\phi^*_{\bbf^m}|_{\bbf[x_1,\ldots,x_m]}$ becomes an endomorphism of $\bbf[x_1,\ldots,x_m]$, which admits the inverse $\psi$ mapping $f_i^\sharp$ to $x_i$. Hence $\phi^*_{\bbf^m}|_{\bbf[x_1,\ldots,x_m]}$ gives rise to an automorphism of the affine space $\bba_\bbf^m$ which is just $\phiv$.

As to Statement (1.2), by definition  $(f_i,q_k)_{i=1,\ldots,m;k=1,\ldots,n}$ forms a coordinate system of $\bbamn$, which means $f_i,q_k$, $i=1,\ldots,m; k=1,\ldots,n$ generate $\calr$ via the same defining relations as the ones of $\{x_i,\xi_k\}$. So  $\phi^*_{\bbf^m}$ is a homogeneous  automorphism of the superalgebra $\calr$. This completes the proof of the necessary part.

For Part (2), suppose Statement (1.2) already holds. Then $f_1,\ldots,f_m$ are algebraically independent even generators, and $q_1,\ldots,q_n$ are linearly independent odd generators of $\calr$. Furthermore, $\calr=\bbf[f_1,\ldots,f_m]\otimes \bigwedge(q_1,\ldots,q_n)$.  Hence $f_1,\ldots,f_m; q_1,\ldots, q_n$ constitute a coordinate system of $\bbamn$.
The proof is completed.
\end{proof}

\section{Automorphisms of the affine superspaces and super version of Jacobian conjecture}

 Let $\phi=(\phiv, {\phi}^*)$ be a morphism from $\bbamn$ to itself. By Lemma \ref{lem: 4.2}, there are $f_1,\ldots, f_m\in\calr_\bz$ and $q_1,\ldots,q_n\in \calr_\bo$ satisfying (\ref{eq: morTOpoly}). Set
$$\Jac({{\partial f_i}\over {\partial x_j}}):=\det({{\partial f_i}\over {\partial x_j}})_{i,j=1,\ldots,m}\in \calr$$
and
$$\Jac({{\partial q_k}\over {\partial \xi_l}}):=\det({{\partial q_k}\over {\partial \xi_l}})_{k,l=1,\ldots,n}\in \calr$$
where $({{\partial f_i}\over {\partial x_j}})_{i,j=1,\ldots,m}$ and $({{\partial q_k}\over {\partial \xi_l}})_{k,l=1,\ldots,n}$ stand for the matrices of  size  $m\times m$ and of size $n\times n$, respectively, with entries in $\calr$.

\subsection{} We first have the following result.

\begin{prop}\label{prop: 5.1} Let $\phi$ be a morphism from $\bbamn$ to itself. If $\phi$ is an automorphism, then  $\Jac({{\partial f_i}\over {\partial x_j}})\in \bbf^\times+\bJ$ and $\Jac({{\partial q_k}\over {\partial \xi_l}})\in \bbf^\times +\bJ$ where $\bbf^\times=\bbf\backslash\{0\}$.
\end{prop}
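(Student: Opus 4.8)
The plan is to reduce the statement to the two ``block'' situations — the purely even directions and the purely odd directions — by exploiting the tensor decomposition $\calr = \bbf[x_1,\ldots,x_m]\otimes\bigwedge(\xi_1,\ldots,\xi_n)$ together with the filtration of $\calr$ by powers of $\bJ$. First I would apply Proposition \ref{prop: 4.4}: since $\phi$ is an automorphism, $\phi^*_{\bbf^m}$ is a homogeneous automorphism of $\calr$ and $(f_i,q_k)$ is a coordinate system, so in particular $f_1^\sharp,\ldots,f_m^\sharp$ are algebraically independent generators of $\calr^\ev=\bbf[x_1,\ldots,x_m]$, and $q_1,\ldots,q_n$ are linearly independent odd generators. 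The key observation is that modulo $\bJ$ each $f_i$ reduces to $f_i^\sharp$, and modulo $\bJ^2$ each $q_k$ reduces to its linear part $q_k \equiv \sum_l c_{kl}\,\xi_l \pmod{\bJ^2}$ for a scalar matrix $(c_{kl})$.

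For the even Jacobian: reducing the matrix $(\partial f_i/\partial x_j)$ modulo $\bJ$ gives the classical Jacobian matrix $(\partial f_i^\sharp/\partial x_j)$ of the polynomial automorphism $x_i\mapsto f_i^\sharp$ of $\bbf[x_1,\ldots,x_m]$. Since that map is an automorphism of the ordinary polynomial algebra, its Jacobian determinant is a nonzero scalar by the chain rule (differentiate the identity $x_i = (\text{inverse applied to } f^\sharp)$), so $\Jac(\partial f_i/\partial x_j) \equiv c \pmod{\bJ}$ with $c\in\bbf^\times$; that is exactly $\Jac(\partial f_i/\partial x_j)\in\bbf^\times+\bJ$.

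For the odd Jacobian: the entries $\partial q_k/\partial \xi_l$ lie in $\calr_\bz$, and reducing modulo $\bJ$ picks out precisely the scalar $c_{kl}$ from the linear part of $q_k$. So $\Jac(\partial q_k/\partial\xi_l)\equiv \det(c_{kl})\pmod{\bJ}$, and it remains to check that the scalar matrix $(c_{kl})$ is invertible. This follows because $q_1,\ldots,q_n$ generate the exterior algebra over $\calr^\ev$: if $(c_{kl})$ were singular, some nontrivial $\bbf$-linear combination of the $q_k$ would lie in $\bJ^2$, which is impossible for a set of free exterior generators (it would collapse the top exterior power $q_1\cdots q_n$, which must be a unit multiple of $\xi_1\cdots\xi_n$ — nonzero — in $\bigwedge_{\calr^\ev}$). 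Hence $\det(c_{kl})\in\bbf^\times$ and $\Jac(\partial q_k/\partial\xi_l)\in\bbf^\times+\bJ$.

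The main obstacle is the purely formal but slightly delicate bookkeeping in the odd case: one must be careful that $\partial q_k/\partial\xi_l$, while $\bbz_2$-even, can still have a nonzero $\bJ$-part coming from the higher (degree $\ge 3$ in the $\xi$'s) terms of $q_k$, so the reduction ``mod $\bJ$'' is what isolates the linear coefficients $c_{kl}$; and one must justify that free exterior generators indeed force $(c_{kl})$ to be invertible, which is the place where the coordinate-system hypothesis is genuinely used. Everything else is the chain rule and the elementary fact that $\bJ$ is a nilpotent ideal with $\calr/\bJ\cong\bbf[x_1,\ldots,x_m]$.
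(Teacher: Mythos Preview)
Your approach is correct in outline and genuinely different from the paper's. The paper argues geometrically: since $\phi$ is an automorphism, the differential $\sfd\phi_\ba$ is a linear isomorphism of tangent spaces at every $\ba\in\bbf^m$; evaluating at a point kills the odd off-diagonal blocks of the full super Jacobian, so the two block matrices $(\partial f_i/\partial x_j)(\ba)$ and $(\partial q_k/\partial\xi_l)(\ba)$ are separately invertible at every $\ba$. Then one argues by contradiction: if $\Jac(\partial f_i/\partial x_j)\notin\bbf^\times+\bJ$, its $\bJ$-free part is a nonconstant (or zero) polynomial, hence vanishes at some $\ba$, making the Jacobian nilpotent there. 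Your route instead stays algebraic, reducing modulo the filtration by $\bJ$ and invoking the chain rule for the inverse automorphism; this avoids tangent spaces and pointwise evaluation entirely, and makes the role of the coordinate-system hypothesis more transparent.

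One slip to fix: the entries $c_{kl}$ in $q_k\equiv\sum_l c_{kl}\xi_l\pmod{\bJ^2}$ are \emph{a priori} elements of $\calr^\ev=\bbf[x_1,\ldots,x_m]$, not scalars in $\bbf$, since $\bJ/\bJ^2$ is a free $\calr^\ev$-module on $\bar\xi_1,\ldots,\bar\xi_n$. Your top-exterior-power argument still closes this: one computes $q_1\cdots q_n=\det(c_{kl})\,\xi_1\cdots\xi_n$ (the $\bJ^3$-tails contribute to $\bJ^{n+2}=0$), and because $\phi^*$ is an automorphism preserving $\bJ$ it must carry the rank-one free $\calr^\ev$-module $\bJ^n$ isomorphically to itself, forcing $\det(c_{kl})$ to be a unit of $\bbf[x_1,\ldots,x_m]$, i.e.\ an element of $\bbf^\times$. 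With that correction your argument is complete; it is a cleaner, more algebraic alternative to the paper's tangent-space proof.
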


Before the proof, let us recall the tangent space of $\bbamn$ at a point $\bt\in \bbf^m$. In the following, we simply denote $\bbamn$ by $\sfx$.
By definition, the tangent space of $\sfx$ at $\bt$ is $T_\bt(\sfx)=\text{Der}(\co_{\sfx,\bt}, \bbf)$, where $\bbf$ is regarded as an $\co_{\sfx,\bt}$-modules  via the identification $\bbf\cong \co_{\sfx,\bt}\slash \mmm_{\sfx,\bt}$ (see \cite[7.9]{AM}. Keep it in mind that $\bbf$ is an algebraically closed field), where $\mmm_{\sfx,\bt}$ is the unique maximal $\bbz_2$-homogeneous ideal in $\co_{\sfx,\bt}$ as in \S\ref{sec: aff super}. The tangent space $T_\bt(\sfx)$ has  basis $({\partial\over{\partial x_i}})_\bt$,  $({\partial\over{\partial\xi_k}})_\bt$, $i=1,\ldots,m$, $k=1,\ldots,n$.

\vskip5pt
Now we give the proof of Proposition \ref{prop: 5.1}.
\vskip-5pt
\begin{proof} The automorphism $\phi$ of $\sfx$ leads to the homogeneous linear isomorphism  $\sfd\phi_\ba$ of the tangent spaces from $T_\ba(\sfx)$ onto $T_{|\phi|(\ba)}(\sfx)$ for any $\ba\in \bbf^m$, where  $\sfd\phi_\ba$ sends  $D\in T_\ba(\sfx)$ to $D\circ \phi_{\phiv(\ba)}^*\in T_{|\phi|(\ba)}(\sfx)$.
By definition, $\{f_i, q_k\}_{i=1,\ldots,m,k=1,\ldots,q}$ becomes a new coordinate system of $\bbamn$. Hence $T_{|\phi|(\ba)}(\sfx)$ has basis
$({\partial\over{\partial f_i}})_{|\phi|(\ba)}$,  $({\partial\over{\partial q_k}})_{|\phi|(\ba)}$, $i=1,\ldots,m$, $k=1,\ldots,n$. The linear homogeneous isomorphism  $\sfd\phi_\ba$ for any $\ba\in \bbf^m$ yields that for any $\bt\in \bbf^m$ there is a basis transform in $T_\bt(\sfx)$ as below
\begin{align}
({\partial\over{\partial x_i}})_\bt&=\sum_{j=1}^m({\partial f_j\over{\partial x_i}})_\bt({\partial\over{\partial f_j}})_\bt,\cr
({\partial\over{\partial \xi_k}})_\bt&=\sum_{l=1}^n({\partial q_l\over{\partial \xi_k}})_\bt({\partial\over{\partial q_l}})_\bt.
\end{align}
Hence both matrices $({\partial f_j\over{\partial x_i}})_{i,j=1,\ldots,m}$, and $({\partial q_l\over{\partial  \xi_k}})_{k,l=1,\ldots,n}$ are invertible at $\bt$ for any $\bt\in\bbf^m$.

If either  $\Jac({{\partial f_i}\over {\partial x_j}})\notin \bbf^\times+\bJ$ or
$\Jac({{\partial q_k}\over {\partial \xi_l}})\notin \bbf^\times+\bJ$, we show that it gives rise to  contradictions.
Suppose $\Jac({{\partial f_i}\over {\partial x_j}})\notin \bbf^\times+\bJ$, then we can write $\Jac({{\partial f_i}\over {\partial x_j}})= a+ f+ \eta$ where $a\in \bbf$, $f\in \sum_{i=1}^m\bbf[x_1,\ldots,x_m]x_i$, and $\eta\in \bJ$ with $a+f\notin\bbf^\times$. Then the zero points of $a+f$ forms an empty closed subset $\bF$ of $\bbf_m$. Hence there exists $\ba\in \bF$ such that $\Jac({{\partial f_i}\over {\partial x_j}})(\ba)=\eta(\ba)$ which is a nilpotent element. Hence $\Jac({{\partial f_i}\over {\partial x_j}})(\ba)$ is not invertible. This contradicts the linear isomorphism  $\sfd\phi_\ba$ of the tangent spaces from $T(\bbamn)_\ba$ onto $T(\bbamn)_{|\phi|(\ba)}$ for any $\ba\in \bbf^m$.

Similarly, we can show a contradiction whenever $\Jac({{\partial q_k}\over {\partial \xi_l}})\notin \bbf^\times+\bJ$.
This completes the proof.
\end{proof}

\subsection{Conjecture}
\begin{conj} Suppose $\bbf$ is $\bbr$ or an algebraically closed field of characteristic $0$.
Consider the above morphism $\phi$ from the affine superspace $\bbamn$ over $\bbf$ to itself. If  the following super-Jacobian condition:
\begin{align}\label{eq: Jac cond}
&\Jac({{\partial f_i}\over {\partial x_j}})\in \bbf^\times+\bJ;\cr
&\Jac({{\partial q_k}\over {\partial \xi_l}})\in \bbf^\times +\bJ
\end{align}
is satisfied,
 then $\phi$ is an automorphism.
\end{conj}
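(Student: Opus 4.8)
The plan is to exploit the finite filtration of $\calr$ by powers of the nilpotent ideal $\bJ$,
$$\calr=\bJ^0\supseteq \bJ\supseteq \bJ^2\supseteq\cdots\supseteq \bJ^{n+1}=0,$$
and to reduce everything to the induced map on the associated graded algebra. Since $\phi^*:=\phi^*_{\bbf^m}$ satisfies $\phi^*(\xi_k)=q_k\in\calr_\bo\subseteq\bJ$ and is a ring homomorphism, it preserves the filtration, $\phi^*(\bJ^i)\subseteq\bJ^i$, and hence induces an $\bbf$-linear endomorphism $\gr(\phi^*)$ of $\gr\calr=\bigoplus_{i\geq 0}\bJ^i/\bJ^{i+1}$. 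Because the filtration has finite length, the standard descending-induction (five-lemma) argument shows that $\phi^*$ is an $\bbf$-linear isomorphism, hence a superalgebra automorphism of $\calr$, as soon as $\gr(\phi^*)$ is bijective on each graded piece; by Proposition~\ref{prop: 4.4}(2) this forces $\phi$ to be an automorphism of $\bbamn$. So the whole problem is to verify that $\gr(\phi^*)$ is an isomorphism.

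The graded pieces are governed precisely by the two super-Jacobian conditions. On $\gr_0=\calr/\bJ=\bbf[x_1,\ldots,x_m]$ the induced map is the body map $\bar\phi^*\colon x_i\mapsto \bar f_i:=f_i\bmod\bJ$; since $\partial f_i/\partial x_j\equiv \partial\bar f_i/\partial x_j\bmod\bJ$, reducing the first condition modulo $\bJ$ yields $\det(\partial\bar f_i/\partial x_j)\in\bbf^\times$, i.e. exactly the classical Jacobian hypothesis for $\bar\phi^*$. On $\gr_1=\bJ/\bJ^2=\bigoplus_k\bbf[x_1,\ldots,x_m]\,\bar\xi_k$, writing the linear-in-$\xi$ part of $q_k$ as $\sum_l a_{kl}(x)\xi_l$ with $a_{kl}\in\bbf[x_1,\ldots,x_m]$, the induced map is $\bar\phi^*$-semilinear with matrix $(a_{kl})$; since $\partial q_k/\partial\xi_l\equiv a_{kl}\bmod\bJ$, the second condition gives $\det(a_{kl})\in\bbf^\times$, so $(a_{kl})$ is invertible over $\bbf[x_1,\ldots,x_m]$. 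Granting that $\bar\phi^*$ is bijective, $\gr_1(\phi^*)$ is then an isomorphism. Finally each higher piece $\gr_i=\bJ^i/\bJ^{i+1}=\bigwedge^i(\bJ/\bJ^2)$ carries the exterior power $\bigwedge^i\gr_1(\phi^*)$, which is automatically bijective once $\gr_1(\phi^*)$ is. Thus all of $\gr_1(\phi^*),\ldots,\gr_n(\phi^*)$ are isomorphisms, and the entire argument collapses onto a single point: whether $\gr_0(\phi^*)=\bar\phi^*$ is an automorphism of $\bbf[x_1,\ldots,x_m]$.

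The hard part is exactly this last point, and it is not a technical gap but a genuine obstruction. The assertion that the body map $\bar\phi^*$, subject only to $\det(\partial\bar f_i/\partial x_j)\in\bbf^\times$, is an automorphism of $\bbf[x_1,\ldots,x_m]$ \emph{is} the classical Jacobian conjecture in $m$ variables, which is open; indeed at $n=0$ the present conjecture reduces verbatim to the classical one. Consequently the filtered reduction above establishes the conjecture only conditionally: \textbf{granting the classical Jacobian conjecture}, the super statement follows in full. Unconditionally, one can dispose of the obstruction only under an extra hypothesis forcing $\bar\phi^*$ to be bijective directly --- for instance by invoking Theorem~\ref{thm: strong ver Jacobi}, i.e. by additionally assuming that $\phi$ preserves the set of maximal $\bbz_2$-homogeneous ideals (equivalently that $\bar\phi^*$ preserves maximal ideals of $\bbf[x_1,\ldots,x_m]$), which is the route the paper in fact follows and which makes the body map an automorphism in \emph{any} characteristic.
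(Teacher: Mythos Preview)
Your analysis is correct, and in particular your recognition that the statement is a \emph{conjecture} which the paper does not (and cannot, as it stands) prove: at $n=0$ it is literally the classical Jacobian conjecture, and your filtered/associated-graded argument shows cleanly that the general case reduces to that one. The paper does not attempt an unconditional proof either; it only records in Remark~\ref{rem: 5.3} that the super conjecture extends the ordinary one, and then proves the Theorem under the extra maximal-ideal hypothesis.

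Your method for the ``odd'' part is essentially the paper's argument in more invariant language. Where you show that $\gr(\phi^*)$ is bijective on each $\bJ^i/\bJ^{i+1}$ and invoke the finite-length filtration, the paper instead builds an explicit automorphism $\vartheta$ (sending $x_i\mapsto \fff_i$, $\xi_k\mapsto \sum_l c_{kl}\xi_l$) that agrees with $\phi^*$ on the associated graded, and then observes that $\upsilon:=\vartheta^{-1}\circ\phi^*$ satisfies $(\upsilon-\id)(\bJ^i)\subseteq\bJ^{i+1}$, so $\upsilon-\id$ is nilpotent and $\upsilon$ is invertible. These are the same mechanism. Your formulation is in fact slightly sharper: you correctly note that the linear-in-$\xi$ coefficients $a_{kl}$ of $q_k$ lie only in $\bbf[x_1,\dots,x_m]$ with $\det(a_{kl})\in\bbf^\times$, whereas the paper's Lemma~\ref{lem: jacobi-1} asserts $c_{kl}\in\bbf$; your version is what the hypothesis actually yields, and it is all that is needed for the invertibility of $\gr_1(\phi^*)$. (Also, the paper's line ``$\upsilon^{n+1}=\id_\calr$'' is not what follows from $\nu^{n+1}=0$; one should rather say $\upsilon=\id+\nu$ has inverse $\sum_{i=0}^n(-\nu)^i$, which is the content of your five-lemma step.)
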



\begin{remark}\label{rem: 5.3}
(1) Affine spaces can  be regarded special cases of affine superspaces, i.e. $\bba_\bbf^m$ can be regarded as $\bbamn$ with $n=0$. The super version of Jacobian conjecture is indeed an extension of the ordinary Jacobian conjecture.

(2) Consider the canonical homomorphism $\pi:\calr\rightarrow \calr^\ev=\calr\slash \bJ$. The image $\pi(\Jac({{\partial f_i}\over {\partial x_j}}))$ is exactly $\det({{\partial \pi(f_i)}\over {\partial x_j}})_{i,j=1,\ldots,m}$.
When the super Jacobian condition (\ref{eq: Jac cond}) is satisfied,    $\det(J(|\phi|^*))=\det({{\partial \pi(f_i)}\over {\partial x_j}})_{i,j=1,\ldots,m}\in \bbf^\times$.   This yields that $|\phi|$  meets the ordinary Jacobian condition.
\end{remark}

\begin{lemma}\label{lem: jacobi-1} Let $\bbf$ be an field of characteristic $0$, or an algebraically closed field of characteristic $p>0$.
Let $\phi$ be a morphism from $\bbamn$ to itself. If the super-Jacobian condition (\ref{eq: Jac cond}) is satisfied and $|\phi|$ is an automorphism of $\bbf^m$,
then $f_i=\fff_i\mod\bJ$, and
$q_k=\sum_{l=1}^n c_{kl}\xi_j\mod \bJ^2$ where  $\fff_i\in\bbf[x_1,\ldots,x_m]$ satisfying $|\phi|^*(x_i)=\fff_i$ $(i=1,\ldots,m)$, and
 $c_{kl}\in\bbf$ with 
$$0\ne\det(c_{kl})_{k,l=1,\ldots,n}\equiv\Jac({{\partial q_k}\over {\partial \xi_l}})\mod\bJ.$$
\end{lemma}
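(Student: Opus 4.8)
\medskip
\noindent\textbf{Proof strategy.}
The plan is to read everything off from the behaviour of $\phi^{*}$ modulo the nilpotent ideal $\bJ$, using the finite filtration $\calr\supseteq\bJ\supseteq\bJ^{2}\supseteq\cdots\supseteq\bJ^{n+1}=0$ together with $\calr/\bJ\cong\bbf[x_{1},\dots,x_{m}]$ and $\bJ/\bJ^{2}=\bigoplus_{l=1}^{n}\bbf[x_{1},\dots,x_{m}]\,\overline{\xi_{l}}$, which is free over $\bbf[x_{1},\dots,x_{m}]$ on $\overline{\xi_{1}},\dots,\overline{\xi_{n}}$.

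First I would dispose of the even coordinates. By the construction of morphisms of affine superspaces, the underlying map $|\phi|$ has comorphism equal to the endomorphism of $\calr^{\ev}=\calr/\bJ$ induced by $\phi^{*}$; concretely $|\phi|(\bt)=\bigl(\pi(f_{1})(\bt),\dots,\pi(f_{m})(\bt)\bigr)$ for $\bt\in\bbf^{m}$, where $\pi\colon\calr\to\calr^{\ev}$ is the projection (see Remark \ref{rem: 5.3}(2)). Hence $\fff_{i}:=|\phi|^{*}(x_{i})=\pi(f_{i})$, i.e.\ $f_{i}\equiv\fff_{i}\pmod{\bJ}$, and the hypothesis on $|\phi|$ says precisely that the $\fff_{i}$ form a polynomial coordinate system of $\bbf[x_{1},\dots,x_{m}]$. (Since a nonzero even $\xi$-monomial has $\xi$-degree $\geq 2$, one in fact gets $f_{i}\equiv\fff_{i}\pmod{\bJ^{2}}$, which is occasionally convenient.)

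Next, the odd coordinates. As $q_{k}\in\calr_{\bo}\subseteq\bJ$, write $q_{k}=\sum_{l=1}^{n}h_{kl}(x)\,\xi_{l}+r_{k}$ with $h_{kl}\in\bbf[x_{1},\dots,x_{m}]$ and $r_{k}\in\bJ^{2}$, so that $q_{k}\equiv\sum_{l}h_{kl}(x)\xi_{l}\pmod{\bJ^{2}}$. A short computation with the odd derivations gives $\partial q_{k}/\partial\xi_{l}\equiv h_{kl}(x)\pmod{\bJ}$ (the $r_{k}$-contributions land in $\bJ^{2}$, and every entry $\partial q_{k}/\partial\xi_{l}$ is $\bbz_{2}$-even, so the determinant is the ordinary one), whence $\Jac(\partial q_{k}/\partial\xi_{l})\equiv\det(h_{kl}(x))_{k,l}\pmod{\bJ}$. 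The super-Jacobian condition places $\Jac(\partial q_{k}/\partial\xi_{l})$ in $\bbf^{\times}+\bJ$, which is exactly the unit group of $\calr$; applying $\pi$ yields $\det(h_{kl}(x))\in\pi(\calr^{\times})=\bbf[x_{1},\dots,x_{m}]^{\times}=\bbf^{\times}$. Thus the reduction modulo $\bJ$ of the matrix $(\partial q_{k}/\partial\xi_{l})$, namely $(h_{kl}(x))$, is invertible over $\bbf[x_{1},\dots,x_{m}]$ with nonzero constant determinant; this already delivers the asserted congruence $\det(c_{kl})\equiv\Jac(\partial q_{k}/\partial\xi_{l})\pmod{\bJ}$ once the $c_{kl}$ are in hand.

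The main obstacle is exactly the remaining point: promoting ``$\det(h_{kl}(x))\in\bbf^{\times}$'' to ``each $h_{kl}(x)$ is a constant $c_{kl}\in\bbf$''. The determinant condition alone does not force this (a unipotent polynomial matrix has constant determinant without constant entries), so one must bring in more structure --- the interplay with the even block $(\partial f_{i}/\partial x_{j})$ being a unit matrix and with $|\phi|$ being an automorphism, or an analysis of the map that $\phi^{*}$ induces on $\gr_{\bJ}\calr\cong\bbf[x_{1},\dots,x_{m}]\otimes\bigwedge(\xi_{1},\dots,\xi_{n})$, where in degree one it acts $\bbf[x_{1},\dots,x_{m}]$-semilinearly through $x_{i}\mapsto\fff_{i}$ via $(h_{kl}(x))$. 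A natural first move is to precompose $\phi$ with the automorphism of $\bbamn$ lifting $|\phi|^{-1}$, reducing to $\fff_{i}=x_{i}$; one should then either close the constancy argument or, should only the weaker (and clearly correct) statement that $(h_{kl}(x))$ is invertible over $\bbf[x_{1},\dots,x_{m}]$ be needed downstream, record that instead and propagate it up the $\bJ$-adic filtration to conclude that $\phi^{*}$ is an automorphism.
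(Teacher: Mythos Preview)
Your unpacking of the even part ($f_{i}\equiv\fff_{i}\pmod{\bJ}$) and of the odd part up through $\det(h_{kl}(x))\in\bbf^{\times}$ is exactly what the paper intends: its entire proof reads ``It directly follows from the definitions,'' and you have simply written out what that means.

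Your ``main obstacle'' is not a gap in your argument but an error in the lemma as stated. The assertion $c_{kl}\in\bbf$ is false under the given hypotheses. Take $m=1$, $n=2$, and let $\phi^{*}$ fix $x_{1}$, send $\xi_{1}\mapsto\xi_{1}+x_{1}\xi_{2}$, and fix $\xi_{2}$. Then $|\phi|=\id_{\bbf^{1}}$ is an automorphism, both Jacobians equal $1\in\bbf^{\times}$, yet $q_{1}\equiv\xi_{1}+x_{1}\xi_{2}\pmod{\bJ^{2}}$ has a nonconstant coefficient. So no amount of interplay with the even block or reduction to $\fff_{i}=x_{i}$ will force constancy; your unipotent-matrix remark already explains why.

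What is actually true, and what you have proved, is that the coefficients $h_{kl}$ lie in $\bbf[x_{1},\dots,x_{m}]$ with $\det(h_{kl})\in\bbf^{\times}$, so that $(h_{kl})$ is invertible over $\bbf[x_{1},\dots,x_{m}]$. This weaker statement is exactly what the subsequent theorem needs: the $\qqq_{k}:=\sum_{l}h_{kl}\xi_{l}$ form a free $\bbf[x_{1},\dots,x_{m}]$-basis of $\bigoplus_{l}\bbf[x_{1},\dots,x_{m}]\xi_{l}$, whence $\calr=\bigwedge_{\bbf[\fff_{1},\dots,\fff_{m}]}(\qqq_{1},\dots,\qqq_{n})$ and the map $\vartheta$ defined there is an automorphism. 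Your suggested fix---record the invertibility of $(h_{kl})$ rather than constancy of its entries---is the correct repair, and the rest of the paper's argument goes through unchanged with it.
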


\begin{proof} It directly follows from the definitions.
\end{proof}

\subsection{} In the following, we suppose $\bbf$ is an algebraically closed field of any  characteristic.  The affine space $\bba_\bbf^{m|n}$ is simply denoted by $\bbamn$.
\begin{theorem} Let $\phi$ be a morphism from $\bbamn$ to itself. If the super-Jacobian condition (\ref{eq: Jac cond}) is  satisfied,
and additionally $\phi^*_{\bbf^m}$ preserves the set of maximal $\bbz_2$-homogeneous ideals of $\calr$, then $\phi$ is an automorphism of $\bbamn$.
\end{theorem}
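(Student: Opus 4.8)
The plan is to prove that the global comorphism $\varphi:=\phi^*_{\bbf^m}\colon\calr\to\calr$ (which is $\bbz_2$-homogeneous, being part of a morphism of superspaces) is an automorphism of the superalgebra $\calr$; by Proposition~\ref{prop: 4.4}(2) this is exactly what is needed. Since $\varphi(\xi_k)=q_k\in\calr_\bo\subseteq\bJ$, we have $\varphi(\bJ)\subseteq\bJ$, so $\varphi$ descends to a ring endomorphism $\bar\varphi$ of $\calr/\bJ=P_m:=\bbf[x_1,\dots,x_m]$ with $\bar\varphi(x_i)=\fff_i:=\pi(f_i)$, where $\pi\colon\calr\to P_m$ is the canonical projection. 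The strategy is to first show that $\bar\varphi$ is an automorphism of $P_m$ by reducing to Theorem~\ref{thm: strong ver Jacobi}, and then to bootstrap this result up the $\bJ$-adic filtration to conclude that $\varphi$ itself is bijective.

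For the even part, Remark~\ref{rem: 5.3}(2) shows that the super-Jacobian condition~(\ref{eq: Jac cond}) forces $\det(\partial\fff_i/\partial x_j)_{i,j}\in\bbf^\times$, so $\bar\varphi$ satisfies the classical Jacobian condition. Moreover, every maximal $\bbz_2$-homogeneous ideal of $\calr$ contains $\bJ$ (as $\bJ$ is nilpotent) and is therefore the $\pi$-preimage of a unique maximal ideal of $P_m$; under this bijection the hypothesis that $\varphi$ preserves $\scrm$ translates into the statement that $\bar\varphi$ preserves the set of maximal ideals of $P_m$. Theorem~\ref{thm: strong ver Jacobi} then applies and gives that $\bar\varphi$ is an automorphism of $P_m$; in particular $\fff_1,\dots,\fff_m$ are algebraically independent and generate $P_m$.

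Next I would treat the nilpotent (odd) directions via the finite filtration $\calr\supseteq\bJ\supseteq\bJ^2\supseteq\dots\supseteq\bJ^{n+1}=0$. Writing $q_k\equiv\sum_l g_{kl}\xi_l\pmod{\bJ^2}$ with $g_{kl}\in P_m$, one has $\partial q_k/\partial\xi_l\equiv g_{kl}\pmod\bJ$ (cf.\ Lemma~\ref{lem: jacobi-1}), so the second line of~(\ref{eq: Jac cond}) gives $\det(g_{kl})_{n\times n}\in\bbf^\times$; hence, setting $g:=(g_{kl})$, for every $r$ the compound matrix $\wedge^r g$ (the $\binom nr\times\binom nr$ matrix of $r\times r$ minors of $g$) is invertible over $P_m$. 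For $K=\{k_1<\dots<k_r\}$ one computes $\varphi(\xi_K)=q_{k_1}\cdots q_{k_r}\equiv\sum_{|L|=r}(\wedge^r g)_{KL}\,\xi_L\pmod{\bJ^{r+1}}$. I would then prove by induction on $r\ge1$ that $\calr=\im\varphi+\bJ^r$: the case $r=1$ is the surjectivity of $\bar\varphi$; for the inductive step, inverting $\wedge^r g$ over $P_m$ expresses each $P_m$-module generator $h\xi_L$ ($h\in P_m$, $|L|=r$) of $\bJ^r$ as $\sum_K hH_{LK}\,\varphi(\xi_K)$ modulo $\bJ^{r+1}$ with $H_{LK}\in P_m$, and since $\bar\varphi$ is surjective each coefficient $hH_{LK}$ equals $\varphi(a_{LK})$ modulo $\bJ$ for some $a_{LK}\in\calr$, whence $hH_{LK}\,\varphi(\xi_K)=\varphi(a_{LK}\xi_K)+(\text{element of }\bJ^{r+1})$ because $\varphi(\xi_K)\in\bJ^r$; combining with the inductive hypothesis gives $\calr=\im\varphi+\bJ^{r+1}$. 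Taking $r=n+1$ yields $\calr=\im\varphi$, i.e.\ $\varphi$ is surjective. Since $\calr$ is Noetherian, a surjective ring endomorphism is automatically injective (the chain $\Ker\varphi\subseteq\Ker\varphi^2\subseteq\cdots$ stabilizes), so $\varphi$ is bijective, hence a homogeneous automorphism of $\calr$, and Proposition~\ref{prop: 4.4}(2) finishes the proof.

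The part I expect to be routine is the even reduction — it is essentially a black-box application of Theorem~\ref{thm: strong ver Jacobi} once $\varphi(\bJ)\subseteq\bJ$ and Remark~\ref{rem: 5.3}(2) are noted. The main obstacle is the inductive step of the filtration argument: one can only lift elements of $P_m$ into $\im\varphi$ modulo powers of $\bJ$, never on the nose, so the surjectivity of $\bar\varphi$ and the invertibility over $P_m$ of the compound matrices $\wedge^r g$ (the content of the odd super-Jacobian condition) must be dovetailed to keep the error terms within $\bJ^{r+1}$ at each stage.
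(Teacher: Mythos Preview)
Your proof is correct, and the reduction of the even part to Theorem~\ref{thm: strong ver Jacobi} via Remark~\ref{rem: 5.3}(2) and the bijection between maximal $\bbz_2$-homogeneous ideals of $\calr$ and maximal ideals of $P_m$ is exactly what the paper does. The treatment of the odd (nilpotent) part, however, follows a genuinely different route. After establishing that $\bar\varphi=|\phi|^*$ is an automorphism of $P_m$, the paper invokes Lemma~\ref{lem: jacobi-1} to build an explicit auxiliary automorphism $\vartheta$ of $\calr$ (sending $x_i\mapsto\fff_i$ and $\xi_k\mapsto\qqq_k=\sum_l c_{kl}\xi_l$) and then shows that $\upsilon:=\vartheta^{-1}\circ\phi^*_{\bbf^m}$ satisfies $(\upsilon-\id)(x_i)\in\bJ$ and $(\upsilon-\id)(\xi_k)\in\bJ^2$, so $\upsilon$ is unipotent with respect to the $\bJ$-adic filtration and hence an automorphism; composing with $\vartheta$ finishes. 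You instead prove surjectivity of $\varphi$ directly, by induction along the finite filtration $\bJ^r$ using the invertibility over $P_m$ of the compound matrices $\wedge^r g$, and then obtain injectivity from the Noetherian property of $\calr$. Your argument is a bit more robust in that it only needs $g_{kl}\in P_m$ with $\det g\in\bbf^\times$, and it never requires verifying that an auxiliary map is an automorphism; the paper's approach, by contrast, packages the same filtration phenomenon more compactly once $\vartheta$ is available and makes the inverse of $\phi^*_{\bbf^m}$ essentially explicit.
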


\begin{proof} Recall that for $\calr=\calr_\bz\oplus\calr_\bo$, any maximal $\bbz_2$-homogeneous ideal $I$ of $\calr$ must be of the form $I_\bz\oplus \calr_\bo$ where $I_\bz$ is a maximal ideal of $\calr_\bz$. Note that $\calr_\bz=\bbf[x_1,\ldots,x_m]+\calr_\bz\cap \sum_{1\leq k<l\leq n}\calr \xi_k\xi_l$, and the second part $\calr_\bz\cap \sum_{1\leq k<l\leq n}\calr \xi_k\xi_l$ is nilpotent. Hence the maximal ideal $I$ can be further written as
$$\sum_{i=1}^m \bbf[x_1,\ldots,x_m](x_i-a_i)+\sum_{1\leq k<l\leq n}\calr \xi_k\xi_l+\calr_\bo$$
for some point $\ba=(a_1,\ldots,a_m)\in \bbf^m$. Denote $\frakm_\ba:=\sum_{i=1}^m \bbf[x_1,\ldots,x_m](x_i-a_i)$. Then \{$\frakm_\ba\mid \ba\in \bbf^m\}$ forms  the complete set of maximal ideals of  $\bbf[x_1,\ldots,x_m]\cong \calr^\ev$. By the assumption, $\phi^*_{\bbf^m}$ preserves the set of maximal $\bbz_2$-homogeneous ideals of $\calr$, consequently $\pi\circ \phi^*_{\bbf^m}$ preserves the set of maximal ideals of $\calr^\ev$.
Hence the comorphism $\phiv^*$ preserves the set of all maximal ideals of $\bbf[x_1,\ldots,x_m]$. By Remark \ref{rem: 5.3}(2) and Theorem \ref{thm: strong ver Jacobi},
$\phiv$ is an automorphism of the affine space $\bba_\bbf^m$. Hence the comorphism $\phiv^*$ is an automorphism of $\bbf[x_1,\ldots,x_m]$. According to Proposition \ref{prop: 4.4} and its proof, $f_i^\sharp, i=1,\ldots,m$ are algebraically-independent generators of $\bbf[x_1,\ldots,x_m]$.

In the following,  we will show that  $\phi^*_{\bbf^m}$ is an   automorphism of the superalgebra $\calr$. By Lemma \ref{lem: jacobi-1}, $f_i=\fff_i \mod\bJ$, and $q_k=\sum_{l=1}^n c_{kl}\xi_j\mod \bJ^2$ where we can identify  $f_i^\sharp$ with the element $\fff_i\in \bbf[x_1,\ldots,x_m]$ which satisfies $|\phi|^*(x_i)=\fff_i$, and
$c_{kl}\in\bbf$ with
\begin{align}\label{eq: jacobi}
0\ne\det(c_{kl})_{k,l=1,\ldots,n}\equiv\Jac({{\partial q_k}\over {\partial \xi_l}})\mod\bJ.
\end{align}
By the arguments in the above paragraph  $\frak{f}_i\in\calr_\bz$ are algebraically independent for $i=1,\ldots,m$. Similarly, by (\ref{eq: jacobi}) we have that  $\frak{q}_k:=\sum_{l=1}^n c_{kl}\xi_j\in\calr_\bo$ are $\bbf$-linearly independent for $k=1,\ldots,n$.  Consequently, $\calr=\bigwedge_{\bbf[\fff_1,\ldots,\fff_m]}(\qqq_1,\ldots,\qqq_n)$.
Then we can define a homomorphism $\vartheta$ of $\calr$ via sending $x_i$ to $\fff_i$ and $\xi_k$ to $\qqq_k$ respectively, for $i=1,\ldots,m$ and $k=1,\ldots,n$. By definition, $\vartheta$ is an automorphism of $\calr$ with $\vartheta|_{\bbf[x_1,\ldots,x_m]}=|\phi|^*$. Hence $\vartheta^{-1}$ is also an automorphism of $\calr$ defined by sending the ordinates $(\fff_1,\ldots,\fff_m; \qqq_1,\ldots,\qqq_n)$ to
$(x_1,\ldots,x_m;\xi_1,\ldots,\xi_n)$.

Now we consider  the endomorphism $\upsilon:=\vartheta^{-1}\circ\phi^*_{\bbf^m}$ of $\calr$. It's easily seen that $\upsilon=\id_\calr+ \nu$ with $\nu$ being nilpotent.  This is because $\nu=\upsilon-\id_\calr$ satisfies that $\nu(x_i)\in \bJ$ and $\nu(\xi_k)\in \bJ^2$. Hence $\nu^{n+1}=0$. This yields that
$\upsilon^{n+1}=\id_\calr$. Hence $\phi^*_{\bbf^m}$ is an automorphism of $\calr$. By Proposition \ref{prop: 4.4}(2), $\phi$ is an automorphism of $\bbamn$.
\end{proof}

\vskip20pt
\subsection*{Acknowledgements} The author expresses his thanks to the anonymous referee for his or her helpful comments.

\end{document}